\definecolor{commentcolor}{rgb}{0.8,0.2627,0.0902}
\let\oldnl\nl
\newcommand{\nonl}{\renewcommand{\nl}{\let\nl\oldnl}}
\DeclareMathOperator*{\minimize}{minimize}
\DeclareMathOperator*{\st}{subject\ to}
\newtheorem{theorem}{Theorem}
\title{Structured LQ-Control of Irrigation Networks }
\author{Martin Heyden \and Richard Pates \and Anders Rantzer}
\title{A Structured Optimal Controller for Irrigation Networks}
\date{}
\author{Martin Heyden, Richard Pates and Anders Rantzer
\footnote{This work was supported by the Swedish Foundation for Strategic Research through the project SSF {RIT15-0091} SoPhy.\newline
\indent The authors are members of the LCCC Linnaeus Center and the ELLIIT Excellence Center at Lund University.\newline
\indent The authors are with the Department of Automatic Control, Lund University, Box 118, {SE-221 00 Lund}, Sweden. \newline
\indent © 2022 IEEE.  Personal use of this material is permitted.  Permission from IEEE must be obtained for all other uses, in any current or future media, including reprinting/republishing this material for advertising or promotional purposes, creating new collective works, for resale or redistribution to servers or lists, or reuse of any copyrighted component of this work in other works.
}%
}
\crefname{assumption}{Assumption}{Assumptions}
\begin{document}
\maketitle

\begin{abstract}
In this paper, we apply an optimal {LQ} controller, which has an inherent structure that allows for a distributed implementation, to an irrigation network. The network consists of a water reservoir and connected water canals. The goal is to keep the levels close to the set-points when farmers take out water. The {LQ} controller is designed using a first-order approximation of the canal dynamics, while the simulation model used for evaluation uses third-order canal dynamics. The performance is compared to a P controller and an {LQ} controller designed using the third-order canal dynamics. The structured controller outperforms the {P controller}  and is close to the theoretical optimum given by the third-order {LQ} controller for disturbance rejection.

\end{abstract}

\section{Introduction}
 A large share of the available fresh water in the world is used for irrigation networks that supply water for food production. These networks are often only powered by gravity, and thus the water levels must be sufficiently high to enable transportation of the water. As a consequence, irrigation networks are often operated conservatively, as the farmers must be able to get water when they need it \cite{weyer2008control}. The efficiency of irrigation networks was estimated to be around 50\%, with half of the losses coming from large-scale distribution losses, which occur before the water reaches the farms \cite{mareels2005systems}. Improving the performance of these networks could lead to large savings in water that could allow for higher food production.

In the research literature, there are two dominant paths for controlling irrigation networks, namely local PI control \cite{weyer2002decentralised,litrico2003modelling,lozano2010simulation} and centralized {LQ} or MPC control \cite{weyer2003lq,neshastehriz2014water}.
Other approaches include distributed {LQ} \cite{lemos2012distributed} and distributed $H_\infty$-control \cite{cantoni2007control}. These distributed approaches typically use multiple iterations of communication for each sample time. An alternative is a non-iterative predictive  controller \cite{negenborn2009non}. Here the inputs are calculated sequentially by a communication sweep through the network. This implementation structure is similar to the one used in this paper.

In this work, the structured optimal {LQ} controller with a distributed implementation studied in our previous paper \cite{heyden2021structured} is applied to a model for irrigation networks. This controller combines the simple and efficient implementation of distributed methods with the performance of centralized controllers.
This {LQ} controller is synthesized using a model with first-order pool dynamics but evaluated on a model with third order-pool dynamics found in the literature. This is not a design choice as the structured {LQ} controller can only be synthesized on first-order dynamics. However, such first-order models are easier to identify.
Furthermore, the first-order pool dynamics describe the system well on slow time scales, and controllers are frequently \textit{designed} using them, see for example \cite{schuurmans1999simple,litrico2005design}. However, it is important to not excite the wave dynamics. In this paper this is achieved by applying a low-pass filter to the measurements taken at each gate. This means that the controller can be designed based on a first-order model in conjunction with knowledge of the dominant wave frequency.

Our contributions are twofold. Firstly, we show how to apply the structured controller in \cite{heyden2021structured} to irrigation models based on third-order canal dynamics. Secondly, we compare the performance to a simple P controller and an {LQ} controller with full state knowledge synthesized using the third-order canal dynamics. The P controller gives a baseline for easily achievable performance while the {LQ} controller gives optimal performance. 
For disturbance rejection of low-pass filtered disturbances, the structured controller is very close to the best performance and outperforms the P controller. For a change in set-points, the structured controller is in-between the maximum performance and the performance of the P controller.

\section{Problem Description}\label{sec:problem_desc}
\label{sec:problem_description}
Irrigation networks consist of a set of canals (often called pools), gates, and off-takes. The canals are connected with gates that allow for the flow between the canals to be regulated. The off-takes, often located at the gates, allow water to be taken from the canal to a farmer. The gates and off-takes are typically only powered by gravity, and thus the levels at the gates and off-takes must be sufficiently high to allow the water to be transported.
Many irrigation networks are located in rural areas, where both communication and computational capabilities are limited.

When controlling irrigation networks, there are typically several objectives that are considered \cite{weyer2008control}. The first is to keep the canal levels close to the set-points to allow the off-takes to be used.
The second is to minimize gate movement in order to reduce wear and tear, and minimize energy consumption.
Finally it is also common to try to minimize the flow over the last gate to reduce water wastage.
At the same time, the controller must handle the disturbances due to the off-takes.

To model a string of $N$ pools, we assume we measure the levels $y_1,\ y_2,\ ...\ , y_N$ relative to a nominal value at the end of each pool.  Each pool $i$ is affected by an inflow $u_i$, an outflow $u_{i-1}$, and a disturbance $d_i$. The flows $u_i$ between two pools are also relative to a nominal flow. For a schematic of the system, see Fig.~\ref{fig:schematic}.
The disturbance $d_i$ is the off-take to the farm(s) at gate $i$. We assume that these disturbances are planned, that is the controller knows, but cannot change, the value of $d_i[t]$. This means that the farmers must tell the irrigation network controller in advance that they will take out water.
To keep the indexing consistent with our previous work, we denote the most upstream canal as canal $N$. This canal has inflow from a reservoir with a capacity so large it can be assumed to be infinite for the purposes of regulation. 
\begin{figure}
    \centering
    \includegraphics{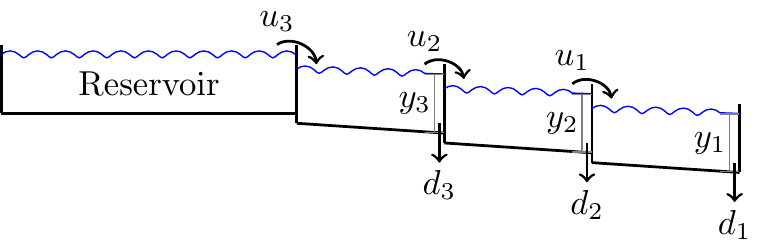}
    \caption{Graphical illustration of the problem considered. At the top of the network is a reservoir with infinite capacity. Each pool $i$ has an inflow $u_i$, an outflow $u_{i-1}$ (except pool 1) and a disturbance $d_i$ which takes water out of the pool. The goal is to regulate the water level $y_i$ at the gates.}
    \label{fig:schematic}
\end{figure}
Finally, we let the flow over the last gate be fixed. This is possible if the level in the last pool is kept close to the set-point, and doing so is highly desirable as the flow over the last gate leaves the system and can not be utilized \cite{cantoni2007control} (typically this would be fixed to be as low as possible).

Next, we will describe the simulation models used, including the dynamics for each pool in the system. The section is then concluded with a presentation of the performance criterion used.

\subsection{Network Model}
In this paper two different types of pool model, for two different pools, are used (a total of four models). For evaluation, we use third-order models found using system identification on pools 9 and 10 in the Haughton main river. See \cite{ooi2005physical} for the origin of the parameters, where it was also shown that the  models are as accurate as a PDE approach using the St-Venant equations. First order approximations of these models are used for controller design (as will be discussed in detail later). We use the two pool models to construct networks containing multiple pools. The first network type is a non-homogeneous network which alternates between the first and the second pool model. This network model is used to assess the effect of heterogeneity. The second network type is a homogeneous network using only the first pool model. This network is suitable to clearly see the effect of, for example, changing the size of the network.
 
Two modifications to the original pool models are made. Firstly, in \cite{ooi2005physical} the flow over a gate $i$ is in the form of $(y_i-p_i)^{3/2}$ where $p$ is the position of the gate relative to the nominal water level. This non-linearity can be canceled out (see for example \cite{cantoni2007control}) by letting $u_i = (y_i-p_i)^{3/2}$.  Secondly, we expand the pool models with a disturbance corresponding to an off-take. The assumption is that the off-take takes water out of the pool in the same way as the outflow.  The modified pool dynamics are in the form of
\begin{equation}\label{eq:third_order}
\begin{aligned}
y_i[t+1]  =  b_{i,1}u_i[t-\tau_i]- b_{i,2}u_i[t-\tau_i-1] + b_{i,3}u_i[t-\tau_i-2]& \\
  - c_{i,1}(u_{i-1}[t]-d_i[t]) + c_{i,2}(u_{i-1}[t-1]-d_i[t-1])& \\-c_{i,3}(u_{i-1}[t-2]-d_i[t-2])& \\
  + y_i[t] + \alpha_{i,1}(y_i[t] - 2y_i[t-1]+y_i[t-2])
  + \alpha_{i,2}(y_i[t] - y_i[t-1])&.
  \end{aligned}
\end{equation}
The sample time is one minute and the parameters for the two pools can be found in Table~\ref{tab:parameters}.  

\begin{table*}
\centering
\caption{The parameters for the first and third-order models. For the first-order model we let $b_i = b_{i,1}$ and $c_{i} = c_{i,1}$. The sample time is one minute.}
\label{tab:parameters}
\begin{tabular}{l l l l l l l l l l l}
\hline
Pool & Order & $b_{i,1}$ & $b_{i,2}$ & $b_{i,3}$ & $c_{i,1}$ & $c_{i,2}$ & $c_{i,3}$ & $\alpha_{i,1}$ & $\alpha_{i,2}$ & $\tau_i$ \\
\hline
1 & 1 & 0.069 & & & 0.063 & & & & & 3\\
1 & 3 & 0.137 & 0.155 & 0.053 & 0.190 & 0.333 & 0.175& 0.978 & 0.468 & 3\\
2 & 1 & 0.0213 & & & 0.0156 & & & & & 14\\
2 & 3 & 0.134 & 0.244 & 0.114 & 0.101 & 0.185 & 0.087& 0.314 & 0.814 & 16\\
\hline
\end{tabular}

\end{table*}

\subsection{Performance Evaluation}
The performance of the system is measured as the deviation from the nominal values for the levels $y_i$ and flows $u_i$, and how much the input changes, that is $(u_i[t+1] - u_i[t])^2$. This is done by considering the cost
\begin{equation}\label{eq:cost}
\sum_{t= 0}^\infty \sum_{i=1}^N \Big(q_i y_i[t]^2 + r_i u_i[t]^2 + \rho_i(u_i[t+1]-u_i[t])^2 \Big).
\end{equation}
The reason for penalizing $(u_i[t+1]-u_i[t])^2$ is twofold. Firstly it penalizes the wear and tear of the actuator. Secondly, it reduces the energy consumption. The amount of energy available can be limited, for example when the only available energy comes from solar power.
The structured controller can only be used when $\rho_i = 0$ for all inputs and $r_i = 0$ for all inputs except for $i=N$, which is the flow out from the reservoir into pool $N$. The effect of these limitations will be explored in the simulation section.

\section{A Structured Optimal Controller for a First-Order System}\label{sec:struct_contr}
As previously discussed, a controller for an irrigation network must handle the disturbances from the off-takes. If the network is in a rural area there might also be a limit on the available communication capabilities and computational power. Due to this, a promising candidate for control of irrigation networks is the structured optimal {LQ} controller with a distributed implementation studied in our previous paper \cite{heyden2021structured}.
We will in this section present a slight variation of that structured controller,
designed for a network model where the pools have the following first-order dynamics,
\begin{equation}\label{eq:first_delayed_des}
y_i[t+1] = y_i[t] + b_iu_i[t-\tau_i-\bar\tau] - c_i(u_{i-1}[t-\bar\tau] - d_i[t-\bar\tau]).
\end{equation}
The dynamics in \eqref{eq:first_delayed_des} is a first-order approximation of the third-order dynamics in \eqref{eq:third_order} when all the inputs and planned disturbances are low-pass filtered. The low-pass filter, which is used to  suppress the wave dynamics, is the source of  the additional delay $\bar\tau$.
The low-pass filter and the model in \eqref{eq:first_delayed_des} will be discussed further in the next section.

Before that, we will present the optimal controller for the first-order pool dynamics in \eqref{eq:first_delayed_des}. That is we study the following {LQ} control problem
\begin{equation}\label{eq:problem}\begin{aligned}
  \minimize_{y,u} \quad & \text{cost in } \eqref{eq:cost}\\
  \st \quad & \text{dynamics in \eqref{eq:first_delayed_des}}\\
  & y[0] \text{ and } d_i[t] \text{ given}.
\end{aligned}\end{equation}

The following Theorem shows that two algorithms can be used to calculate the necessary parameters for, and the implementation of, the optimal {LQ} controller for the problem in \eqref{eq:problem}. Both algorithms are implemented through a serial sweep using local communication and scalar computations. This means that the optimal {LQ} controller can be implemented in a distributed way.
\begin{theorem} \label{thm:struct}
Assume that $r_i = 0$ for $i\neq N$, $\rho_i =0$ for all $i$, and that $d_i[s] = 0$ for all $s>t+H$ for a fixed $H>0$.
Let $\sigma_i = \sum_{j=1}^{i-1}\tau_j$.
Then the minimizing $u_i[t]$ for the problem in \eqref{eq:problem}
 is given by running Algorithm 2 with the parameters from Algorithm 1.
\end{theorem}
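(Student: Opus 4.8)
The plan is to reduce the delayed, disturbance-driven problem \eqref{eq:problem} to the undelayed chain studied in \cite{heyden2021structured}, solve that, and translate the solution back. The key observation is that $\sigma_i$ is the cumulative transport delay from the reservoir down to pool $i$, so the relabeling $\tilde y_i[t] = y_i[t-\sigma_i]$, $\tilde d_i[t] = d_i[t-\sigma_i-\bar\tau]$, $\tilde u_i[t] = u_i[t-\sigma_i-\tau_i-\bar\tau]$ lines up the inflow and outflow terms of \eqref{eq:first_delayed_des} at a common time index, collapsing it to the delay-free chain $\tilde y_i[t+1] = \tilde y_i[t] + b_i\tilde u_i[t] - c_i(\tilde u_{i-1}[t] - \tilde d_i[t])$. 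Since this relabeling is a time-shift of the decision variables, it is a bijection between the feasible sets of the two problems that preserves the objective \eqref{eq:cost} up to an additive constant fixed by the given data, so corresponding trajectories are minimizers of one problem iff they are of the other, and the condition $d_i[s]=0$ for $s>t+H$ carries over to the shifted problem.

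For the delay-free chain this is precisely the structured LQ problem of \cite{heyden2021structured}: the coupling graph is a path, $\rho_i=0$, and the only nonzero input weight is $r_N$ at the reservoir, which is exactly the setting in which the value function and optimal policy inherit the chain structure. From that work the optimal control is a linear state feedback whose gains solve a Riccati-type recursion that, under this structure, factorizes into scalar updates passed once along the path. The planned off-takes are incorporated by superposition: the dynamics are affine in the known $\tilde d_i$ and the cost is quadratic, so the optimal $\tilde u_i[t]$ equals the homogeneous feedback law plus a feedforward term, and this feedforward solves a linear recursion running backward from the finite horizon $t+H$ which likewise propagates along the path.

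Finally I would undo the relabeling: substituting back converts the feedback-plus-feedforward law into the claimed closed-form controller, with the offline pass that produces the gains and Riccati-type quantities being Algorithm~1 and the online pass that evaluates $u_i[t]$ from the current levels and scheduled off-takes being Algorithm~2, each a serial sweep using only local communication and scalar arithmetic. I expect the main obstacle to be showing that the two new ingredients, the delays and the planned disturbances, do not destroy the distributed structure: a priori the disturbance feedforward for $u_i[t]$ couples every pool to every future off-take, and one must verify that under $r_i=0$ ($i\neq N$) and $\rho_i=0$ these contributions can be accumulated in a single network sweep, and carefully track how $\sigma_i$, $\tau_i$ and $\bar\tau$ re-enter the implementation as lead and lag, including the effect of truncating at the horizon $H$. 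By comparison, the delay reduction and the appeal to \cite{heyden2021structured} for the homogeneous part should be routine once the right change of variables is identified.
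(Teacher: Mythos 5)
Your high-level plan (transform to the chain model of \cite{heyden2021structured}, use linearity to split state feedback from disturbance feedforward, then identify the offline and online sweeps with Algorithms 1 and 2) shares its skeleton with the paper's proof, but the specific reduction you chose does not work as stated, and the parts you defer as ``routine'' or flag as ``the main obstacle'' are where essentially all of the actual proof lives. The paper does \emph{not} relabel time by $\sigma_i$ to obtain a delay-free chain. It shifts only the inputs and disturbances, and only by the uniform filter delay $\bar\tau$ (by $\tau_N+\bar\tau$ for the reservoir input), so the transformed dynamics $y_i[t+1]=y_i[t]+\nu_i[t-\tau_i]-\nu_{i-1}[t]+\delta_i[t]$ still carry the heterogeneous transport delays $\tau_i$; these are exactly the dynamics for which \cite{heyden2021structured} (Lemma 1-iii and Lemma 3 / Eq.~16 there) already gives the optimal flows, in terms of the aggregates $\xi_k[t]=\sum_{i\le k}\bigl(y_i[t]+\sum_{s=1}^{\tau_i}\nu_i[t-s]\bigr)$ that account for the in-transit water. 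A separate scaling by $\hat b_i$ then handles general $b_i,c_i$ (lines 3--4 of Algorithm 1, lines 1--2 and 19 of Algorithm 2); your delay-free chain still carries $b_i,c_i$ and you never address this normalization.

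The concrete gap in your reduction is this: after relabeling $\tilde u_i[s]=u_i[s-\sigma_{i+1}-\bar\tau]$, the free decision variables of the relabeled problem begin at the node-dependent time $s=\sigma_{i+1}+\bar\tau$ (earlier values are fixed history), and the cost terms for pool $i$ begin at $s=\sigma_i$. So the relabeled problem is \emph{not} the unconstrained delay-free chain problem of \cite{heyden2021structured}; it is that problem with a staggered window of frozen inputs and staggered cost horizons, and the stationary feedback law of the unconstrained problem is not justified inside that window --- which, translated back, covers precisely the times $t\ge 0$ at which the theorem must produce $u_i[t]$. The observation that the relabeling is a cost-preserving bijection of trajectories is true but does not license replacing the constrained problem by the unconstrained one. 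Even granting the reduction, the stationary delay-free law expresses $\tilde u_{i-1}[s]$ in terms of $\tilde y_j[s]=y_j[s-\sigma_j]$, i.e., levels at different physical times, some of which lie in the future of the decision instant; making the law causal requires predicting those levels through the dynamics and verifying that the resulting sums of past inputs and announced off-takes telescope into the quantities $p_i$, $m_i$ and the aggregates $D_i[\cdot]$ with their single-sweep recursions. That computation --- together with the separate treatment of the reservoir flow $u_N$ (the only input with $r_N>0$) and the truncation of its feedforward at the horizon $H$ --- is the substance of the appendix proof, and your proposal names it as an obstacle rather than carrying it out.
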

\begin{proof}
The result is a minor extension of the results in \cite{heyden2021structured}. For completeness, the proof is given in the appendix.
\end{proof}


\begin{algorithm}[h!]
\DontPrintSemicolon
\SetNoFillComment
\SetKwInOut{Input}{input}\SetKwInOut{Output}{output}
\SetAlgoLined
\Input{$q_i$, $r$, $b_i$, $c_i$}
\Output{$\gamma_i$, $\hat{b}_i$, $g$}
\vspace{-0.25cm}
\nonl \hrulefill  \\
\textbf{send} $\gamma_1 = q_1$ and $\hat{b}_1 = b_1$ to upstream neighbor\\
\For (\tcp*[h]{Sweep through the pools}) {gate i = 2:N}   { 
$\hat b_i = {b_i}/{c_i}\cdot\hat b_{i-1}$\\
    $q_i = {c_i^2}/{\hat b_{i-1}^2}\cdot q_i$\\
    \vspace{2pt}
  $\gamma_i = \frac{\gamma_{i-1}q_i}{\gamma_{i-1} + q_i}$\\
  \textbf{send} $\gamma_i$ and $\hat{b}_i$ to upstream neighbor\\
} 
	$r = {r}/{\hat b_N^2}$ \tcp*[r]{Gate N}
	$X = -{\gamma_N}/{2}+\sqrt{\gamma_Nr+\frac{\gamma_N^2}{4}}$ \tcp*[r]{Gate N}
	$g = \frac{X}{X+\gamma_N}$ \tcp*[r]{Gate N}
\caption{Computation of control parameters.}
\label{alg:init}
\end{algorithm}

\begin{algorithm}[h!]
\DontPrintSemicolon
\SetNoFillComment
\SetKwInOut{Input}{input}\SetKwInOut{Output}{output}
\SetAlgoLined
\Input{$y_i[t]$, new $d_i[t]$, output from Algorithm 1}
\Output{$q_i$,$\gamma_i$,$\hat{b}_i$}
\vspace{-0.25cm}
\nonl \hrulefill  \\
\tcc{old $u_i[t-s]$ and $D_i[t+s]$ are kept in memory}
$y_i = \frac{\hat b_{i-1}}{c_i}y_i$ \tcp*[r]{Done in parallel for $i\geq 2$}
$d_1 = c_1d_1, \ \  d_i = b_{i-1} d_i \ i\geq 2$\\
\tcc{Update unchanged $D_i[t_0+\sigma_i]$}
\For(\tcp*[h]{Done in parallel, $\mathcal{O}(1)$}){gate i = N:2 }{ 
\textbf{Send} $D_{i-1}[t+\sigma_{i-1}+\tau_{i-1}] = D_{i}[t+\sigma_{i}] - d_{i}[t]$\\ \nonl \qquad downstream.\\
}
\tcp{Start sweep through graph}
$m_1[t] = y_1[t] + \sum_{s=1}^{\tau_i+\bar\tau}u_1[t-s]  \sum_{s=1}^{\bar\tau}d_1[t-s]+$\\ $\nonl \qquad \sum_{s=0}^{\tau_i}D_1[t+\sigma_i+s]$ \\
\textbf{send} $m_1[t]$ upstream \\
\For {gate i = 2:N} {
\tcc{ For $t+\sigma_i\leq s <t+\sigma_N+H$}
    \If{$d_i[s-\sigma_i]$ \emph{changed} or $D_{i-1}[s]$ \emph{received}}
    {\textbf{send} $D_i[s] = D_{i-1}[s] + d_i[s-\sigma_i] $ upstream
    }   
$p_i[t] = y_i[t] + \sum_{s=\tau_i}^{\tau_i+\bar\tau}u_i[t-s] $\\
\nonl $\qquad \qquad- \sum_{s=1}^{\bar\tau}u_{i-1}[t-s] + \sum_{s=0}^{\bar\tau}d_i[t-s]$\\

$ 	m_i[t] = m_{i-1}[t] + p_i[t]+\sum_{s=1}^{\tau_i-1}u_i[t-s]$\\
\nonl $\qquad \qquad + \sum_{s=1}^{\tau_i}D_i[t+\sigma_i+s]$\\
\textbf{send} $m_i[t]$ upstream \\
}
$u_{i-1} = (1-\gamma_i/q_i)p_i[t] -\gamma_i/q_i \cdot m_{i-1}[t], \quad 2\leq i \leq N$\\
$u_N[t] = -\frac{X}{r}\left[m_N + \sum_{s=\tau_N+1}^{H} D_N[t+\sigma_{N}+s]\prod_{j=2}^{d+1}g  \right]$\\
\textbf{send} $u_i$ downstream \tcp*[r]{Done in parallel}

$u_{i-1} = 1/\hat b_{i-1} \cdot u_{i-1}$ \tcp*[r]{For all gates} 
\caption{ Implementation of control law.}
\label{alg:impl}
\end{algorithm}

For both algorithms all measurements and calculations are made at the gates. Gate $i$ is at the end of pool $i$, and is responsible for deciding $u_{i-1}$.

Algorithm 1 can be used to calculate the parameters needed to implement the feedback law. The algorithm consists of a sweep through the graph.
On line 3-4 the parameters $b$ and $q_i$ are re-scaled, which corresponds to transforming the dynamics in \eqref{eq:first_delayed_des} to the form in \cite{heyden2021structured}. On line 5 the parameter $\gamma_i$ is calculated recursively. Finally, when the sweep is completed, the parameters needed to calculate the optimal outflow from the reservoir are calculated on line 8-10, including another scaling on line 8.

Algorithm 2 is used for the online implementation of the optimal controller.
The algorithm assumes that each gate stores its incoming and outgoing flow and the disturbance sums $D_i[s]$, defined as 
\[
D_i[t] = \sum_{j=1}^i d_j[t-\sigma_j].
\]
Line 1-2 is a change of variables.  On line 3-5 the $D_i[s]$ 
for which no new disturbances $d_i$ are announced are updated. Only one $D_i$ for each gate needs to be sent downstream, as the rest of the needed $D_i$ were already known in the gate form the previous time point. This can be done in parallel for all gates.

Next a serial sweep starts at the most downstream pool (pool one) and goes through the graph in the upstream direction.
The sweep accomplishes two things. 
Firstly, on lines 9-11 all $D_i$ for which a new disturbance $d_j$ was announced are updated. When the controller is initialized all non zero $D_i$ need to be updated this way. Secondly, $m_i[t]$ and $p_i[t]$ which are used for the calculation of $u_i$ are calculated on lines 12-14. The variable $p_i[t]$, which is the predicted level in pool $i$ at time $t+\bar\tau + 1$ when the outflow $u_{i-1} = 0$, is calculated on line 12. The calculation of $p$ only requires local and neighboring  information, where the incoming flow to pool $i$ from gate $i$ must be known. For the  calculation  of $m_i[t]$ on line 13, which is the total level in the first $i$ pools, only local information, $p_i[t]$, and the previous $m_{i-1}[t]$ is needed.
Finally $m_i[t]$ is sent upstream on line $14$.

After the sweep is completed all the inputs can be calculated on lines 16-17, relying only on $p_i$ and $m_{i-1}$ (and $D_N$ for $u_N$). The input $u_i[t]$ is then sent downstream to gate $i$ on line 18, as it is needed for the calculation of $p_i$ and $m_i$ in future time-steps.
Finally, all inputs are re-scaled on line 19. A sketch of the information flow for the implementation is found in Fig.~\ref{fig:impl}

\begin{figure}
    \centering
    \includegraphics{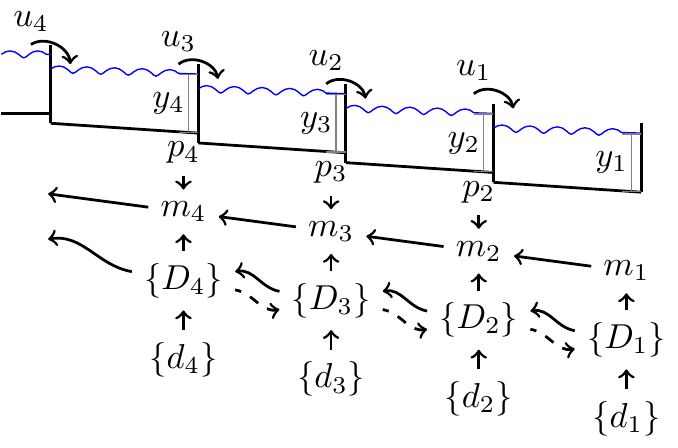}
    \caption{Illustration of the communication structure for the structured controller with 4 pools. The value for $m_i$ is calculated by a sweep through the graph, requiring the downstream $m_{i-1}$, the local $p_i$ and a local set of disturbances $D_i$ (line 12-14 in Algorithm 2). The disturbances $D$ can be calculated in two ways. If any new underlying $d_j$ is announced, then the corresponding $D_i$ must be calculated through a similar sweep to $m$, going through the graph upstream (line 9-11 in Algorithm 2), illustrated by solid arrows . However, if there are no new planned disturbance $d_j$, then the aggregate disturbances $D_i$ can be updated from the upstream gate (line 3-4 in Algorithm 2), illustrated by dashed arrows. Note that for the outflow from the reservoir $u_4$, $m_4$ and $D_4$ will be used and hence they are sent to that gate as indicated by the arrows. }
    \label{fig:impl}
\end{figure}

\section{Applying the Structured Controller to an Irrigation Network}
\label{sec:applying}
In this section, we will go through the steps taken to apply the controller presented in Section \ref{sec:struct_contr} to the simulation models with third-order pool dynamics (presented in Section \ref{sec:problem_description}). While the previous section had a strong theoretical motivation, this section will be more practical. The steps taken here are certainly not the only way to apply the structured controller just presented to the irrigation network model with third order pool dynamics, but constitute a simple and transparent approach.

\subsection{Low-pass Filter}
The third-order system has a poorly damped node, which introduces two problems. Firstly, one wants to avoid introducing waves into the pools. And secondly, the structured {LQ}-controller must be designed using a first-order model, which can not describe the frequency peak.

One alternative to remedy both issues is to design an inner controller at the gate which takes a flow reference and then controls the flow. It should be designed so that the transfer function from the flow reference to the level in the pool would be close to first-order. This would require a detailed model of the pools on both sides of the gate.

We instead choose to add a low-pass filter to each input and each planned disturbance. Filtering the disturbance is natural since waves should be avoided both in the pools and in the off-takes to the farmers. However, additional consideration might need to be taken to make sure that the farmers get the amount of water that they ordered and that the delivery time is not delayed too much by the low pass filter. This could be accomplished by, for example, modifying the farmer's order before applying the low-pass filter. 

The low pass filter at each gate must be designed based on its two neighboring pools so that no waves are induced in either pool.
For simplicity, we use the same low pass filter for all gates, which then must suppress the wave dynamics in both pools.
A Butterworth filter is used for the low-pass filter, as it has minimal effect on the pass-band. The Matlab command \texttt{butter} is used for the design and the final design is a  third-order filter with a cut-off frequency  $3\cdot 10^{-3}$ rad/sec. The resulting bode magnitude plot before and after the low-pass filtering can be found in Fig.~\ref{fig:bodemag}. The third-order models are used both in the design and the evaluation of the low-pass filter. However, if detailed models were not available, it would still be possible to design the low-pass filter based only on knowledge of the dominant wave frequency and evaluate it using open-loop tests in the canals.

\begin{figure}
\centering
\includegraphics[width = 0.9\textwidth]{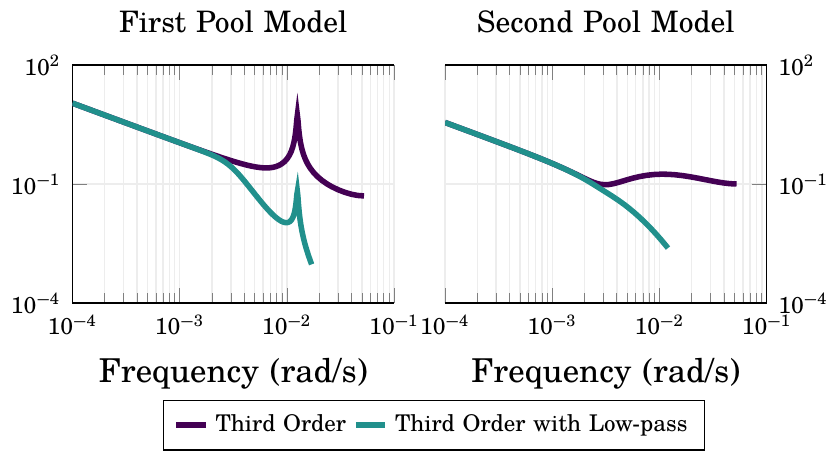}
\caption{The bode magnitude plot for the transfer function from inflow $u_i$ to level $y_i$ for the two sets of parameters in Table \ref{tab:parameters} for \eqref{eq:third_order}, with and without low-pass filter. It can be seen that for low frequencies the third-order pool model can be well described by a first-order system, but for higher frequencies, there is a resonance peak. The low-pass filter manages to suppress this peak.}
\label{fig:bodemag}
\end{figure}

\subsection{First-order approximation}
First-order models in the form of
\begin{equation}\label{eq:first_non_delayed}
	y_i[t+1] = y_i[t] + b_iu_i[t-\tau_i] - c_i(u_{i-1}[t] - d_i[t]),
\end{equation}
 where $b_i>0$ and $c_i>0$, have been shown to describe the water level in a pool well on slow timescales \cite{cantoni2007control}. Just as in the third-order model, in the above $u_i = (y_i-p_i)^{3/2}$, and $y_i$ denotes the water level in the \emph{i}th pool. Parameters for a suitable first-order description of the same two pools from the Haughton main river were given in \cite{weyer2003lq}. However, upon closer examination there was a large difference between the first and third-order model in terms of their DC gains for the inflow into the second pool. To counteract this, we modified the first-order model, where $b_{i,1}$ was increased by a factor of $1.5$. In practice a more principled approach should be used to construct a suitable reduced order model, however it is reassuring that working in this ad-hoc manner still resulted in a good enough model for conducting synthesis.
 
To handle the addition of the low-pass filter we propose a simple update to \eqref{eq:first_non_delayed} as already given in \eqref{eq:first_delayed_des}
\begin{equation*}
	y_i[t+1] = y_i[t]+b_i u_i[t-\tau_i-\bar\tau] - c_i(u_{i-1}[t-\bar\tau]-d_i[t-\bar\tau]).
\end{equation*}
The additional delay $\bar\tau$, which is the same for all pools, can intuitively be motivated as an approximation of the effect of the low pass filter. We also let $\tau_i$ be different from the ones in \cite{ooi2005physical}, as it was noted that this had a positive effect on the performance. The parameters $b_i$ and $c_i$ are unchanged.

The parameters $\tau_i$ and $\bar\tau$ are chosen as follows.
First the optimal $\bar\tau$  is found by simulating the response for both pools to an outflow $u_{i-1}$  corresponding to a constant positive input, followed by a constant zero input, followed by a negative input.
That is
\begin{equation}\label{eq:sys_id}
	u[t] = \begin{cases}
	{\color{white}-}1 \quad &t<t_1\\
	{\color{white}-}0 \quad &t_1 \leq t < t_2 \\
	-1 \quad &t_2 \leq t < t_3\\
	{\color{white}-}0 \quad & t\geq t_3.
	\end{cases}
\end{equation}
The idea is that this describes when a pool is emptied and then filled. A similar open-loop experiment could easily be conducted in an irrigation network.
   The value for $\bar\tau$ that minimizes the least square error (normalized for each pool)  is chosen. This is an integer optimization problem, but the number of reasonable values are limited so we can expect to find the optimal value.
   The resulting value for $\bar\tau$ is $10$. Next, the optimal $\tau_i$ for each pool is found by minimizing the least square error when the inflow $u_i$ is as in \eqref{eq:sys_id}. The resulting value for the first pool model is $\tau_i = 2 $ and for the second pool model $\tau_i = 15$. The resulting system responses when the inflow is zero and the outflow is as in \eqref{eq:sys_id} are plotted in Fig.~\ref{fig:sys_id}, where it can be seen that the first-order system gives a good approximation of the low-pass filtered third-order system.
   
   \begin{figure}
    \centering
    \includegraphics[width = 0.9\textwidth]{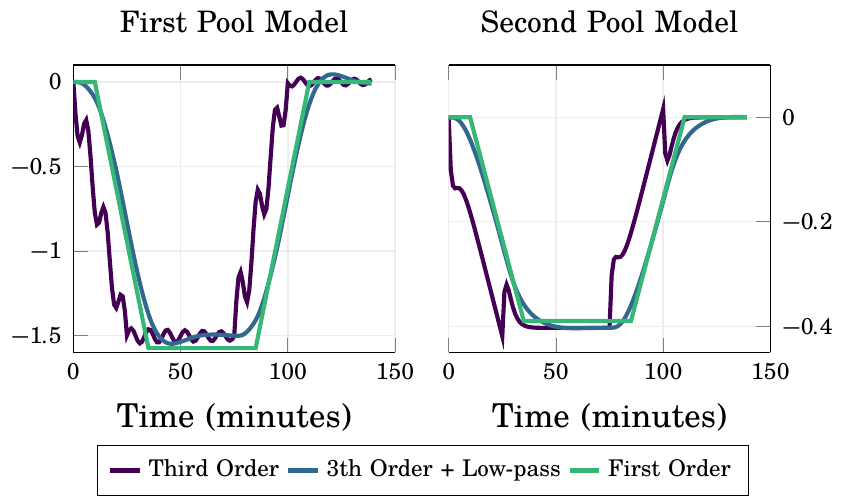}
    \caption{Time response from the outflow $(u_{i-1})$  for the third-order model in \eqref{eq:third_order}, third-order model with low pass, and first-order model with additional delay in \eqref{eq:first_delayed_des}. It can be seen that the low pass filter suppresses most oscillations, and that the first-order pool model captures the behavior of the third-order pool model with a low-pass filter well. }
    \label{fig:sys_id}
\end{figure}

\subsection{Kalman Filter}
The first-order approximation describes the behavior on slow time scales of the third-order model with a low-pass filter. However, there are still some differences. For example, the step response for the first-order model starts slower but finishes faster. These differences can be handled by introducing a Kalman filter, so that in the short term the controller trusts the first-order model, but in the long term it still utilizes the measurements from the third-order model.

For the Kalman filter design we consider the same dynamics used in the controller design, but with added (unknown) state disturbance $v_i[t]$ and measurement disturbance $w_i[t]$,
\[\begin{aligned}
	x_i[t+1] &= x_i[t] + b_iu_i[t-\tau_i-\bar{\tau}] \\
	& \qquad \quad- c_i(u_{i-1}[t-\bar{\tau}]-  d_i[t-\bar{\tau}]) + w_i[t] \\
	y_i[t] & = x_i[t] + v_i[t].
\end{aligned}\]
Changing the relationship between the modeled variance of $w_i$ and $v_i$ allows balancing how much the Kalman filter trusts the measurements compared to the first-order model.

The Kalman filter is updated using the following scalar dynamics which can be implemented locally at each gate,
\[\begin{aligned}
	\hat{y}_{t|t} &  = \hat{y}_{t|t-1} + L(y_{t} - \hat{y}_{t|t-1}) \\
	\hat{y}_{t+1|t}  & = \hat{y}_{t|t} + b_iu_i[t-\tau_i-\bar{\tau}] -c_i( u_{i-1}[t-\bar{\tau}] - d_i[t-\bar{\tau}]).
\end{aligned}\]
In the above, $L$ is the solution to the scalar Riccati equation,
\[
	L = L - L^2/(L+R_2) + R_1,
\]
where $R_1$ is the variance of $w_i$ and $R_2$ is the variance of $v_i$.
 For the simulations we use $R_1 = 1$ and $R_2 = 100$.
The a priori estimate $x_{t|t-1}$ is used in the calculation of the inputs at time $t$. This gives a minute of time for propagating information through the string graph.

\section{Comparison Controllers}
We design two additional controllers to use for comparisons with the structured controller. Firstly, we design a {LQ} controller using the third-order pool model in \eqref{eq:third_order} to get the best possible performance in terms of the performance criterion in \eqref{eq:cost}. Secondly, we design a simple P controller that will give a baseline in terms of easily achievable performance.

To get a fair comparison, the disturbance will be low pass filtered for these controllers as well.
 Furthermore, as the structured controller does not have integral action but instead relies on feed-forward to reject load disturbances, we let the standard {LQ} controller and P controller also use feed-forward and have no integral action.

\subsection{Third-Order {LQ}}
To get a baseline of the best possible performance we consider an {LQ} controller synthesized directly on the third-order dynamics. This controller is not meant to be implementable in practice so we let the controller have access to full state information.

Consider a state space representation for the transfer function in \eqref{eq:third_order} on 
the form
\[\begin{aligned}
	x_i[t+1] &= A_ix_i[t] + B_i(1)u_i[t] + B_i(2)u_{i-1}[t]\\
	y_i[t] & = C_ix_i[t].
\end{aligned}\]
Then using the dynamics $A = \text{diag}(A_1,A_2,\dots,A_N)$, $C = \text{diag}(C_1,C_2,\dots, C_N)$,
\[\setlength\arraycolsep{2pt}
	B = \begin{bmatrix}
		B_1(1) &0 \\
		B_2(2) &B_2(1) & 0  \\
		 
		 \ddots &\ddots & \ddots & \ddots \\
		  & 0 & B_N(2) & B_N(1)
	\end{bmatrix},\
	v[t] = \begin{bmatrix}
		B_1(2)d_1[t]\\
		B_2(2)d_2[t]\\
		\vdots\\
		B_N(2)d_N[t]
	\end{bmatrix},
\]
the dynamics of the water levels $y_i[t]$ for a network with $N$ pools can be described by
\[\begin{aligned}
	x[t+1] &= Ax[t] + Bu[t] + v[t] \\
	y[t] & = C x[t].
\end{aligned}\]
Let $Q = \text{diag}(q_1C_1^TC_1,\ q_2C_2^TC_2,\ \dots,\ q_NC_N^TC_N)$, then the cost due to the pool levels can be expressed as $\sum_{i=1}^N q_iy_i[t]^2 =x[t]^TQx[t]$.
Now, let $S$ be the solution to the Riccati equation
\[
	S =A^TSA- A^TSB(B^TSB+R)^{-1}B^TSA + Q,
\]
and define
\begin{equation*}
\begin{aligned}
	K & = - (B^TSB+R)^{-1}B^TSA \\
	K_d & = -(B^TSB+R)^{-1}B^T \\
	\Pi[t] & = (A+BK)^T\Pi[t+1] + Sv[t], \quad \Pi[H+1] = 0.
\end{aligned}\end{equation*}
Then the optimal input $u[t]$ is given by 
\[
	u[t] = Kx[t] + K_d\Pi[t].
\]
A derivation of the optimal feed-forward for the known disturbance can be found in the appendix of this paper.

To allow for a penalty on the change in input $(u_i[t]-u_i[t-1])^2$ we introduced new states, corresponding to $u_i[t-1]$ and $(u_i[t]-u_i[t-1])^2$.
Additional states could be introduced to further improve the performance of the {LQ} controller, such as penalizing a high pass filtered version of the output to reduce the oscillations in the system, see for example \cite{weyer2008control}. As this {LQ} controller is only used to get the maximum performance, we consider only the aspects captured by the performance measure.

\subsection{P-Controller}
For the P-controller we consider a configuration where the controller at gate $i$ is designed to control the water level at the end of pool $i-1$, which is the level just before the downstream gate $i-1$. This setup is often called distant downstream control \cite{weyer2002decentralised}. The low-pass filter that was used to filter the inputs for the structured controller is also used for the P-controller.
We use feed-forward both on the outflow from the downstream gate $i-1$ and on the off-take at the downstream gate. Thus the controller is in the form of
\[
	u_i[t] = -k_i y_i[t] + k_{ff}\frac{c_i}{b_i}(u_{i-1}[t-1] - d_i[t+\tau_i]).
\]
The fraction $c_i/b_i$ is used to account for the different coefficients in the inflow and outflow. The feed-forward on the downstream input $u_{i-1}[t-1]$ is delayed as otherwise $u_i[t]$ would depend on all $u_j[t]$ for $j<i$.

We use the following values for the controller parameters,
\begin{equation}
\label{eq:p_params}
k_i = \frac{\pi}{2(\tau_i+\bar\tau)b_i}\frac{1}{4},\quad k_{ff} = 1.
\end{equation}

The choice of $k_i$ was partially found by hand-tuning, but can also be theoretically motivated. For the  design of the P-controller the outflow from the downstream gate can be modeled as a  disturbance. Using the model in \eqref{eq:first_delayed_des} gives the following continuous time dynamics
\[
	\dot{y}_i = b_i u_i(t-\tau_i-\bar\tau) + d_i(t) \Rightarrow G(s) = \frac{b_i}{s}e^{-(\tau_i+\bar\tau) s}.
\]
Ignoring the feed-forward, the controller is in the form of $u_i(t) = -k_i y_i(t)$, which gives the loop transfer function
\[
	\frac{k_ib_i}{s}e^{-(\tau_i+\bar\tau)s}.
\]
Picking $k_i$ as in \eqref{eq:p_params}, the time responses for all the pools will have the same shape, but with different time constants.
Considering the gain margin
\[
	\frac{\pi}{2(\tau_i+\bar\tau)b_ik_i}
\]
and the phase margin
\[
	\frac{\pi}{2} - (\tau_i+\bar\tau)b_ik_i
\]
shows that the choice of $k_i$ gives a gain margin of $4$ and a phase margin of $67.5$ degrees.

\section{Simulations}
In this section we use the two networks discussed in Section \ref{sec:problem_description} to compare the performance of the three different controllers.
In the first part  we consider cost functions that satisfies the assumption for the structured controller, that is $r_i = 0,\ i\neq N$ and $\rho_i=0$.  We explore both the time response for the different controllers, and study how they scale with the size of the network. Next we explore the limitations for the structured controller by comparing how well one can balance the deviations in inputs and in the levels. All code used for the simulation is available on GitHub\footnote{\url{https://github.com/Martin-Heyden/ECC-irrigation-network}}.

\begin{sidewaysfigure}
\centering
\includegraphics[]{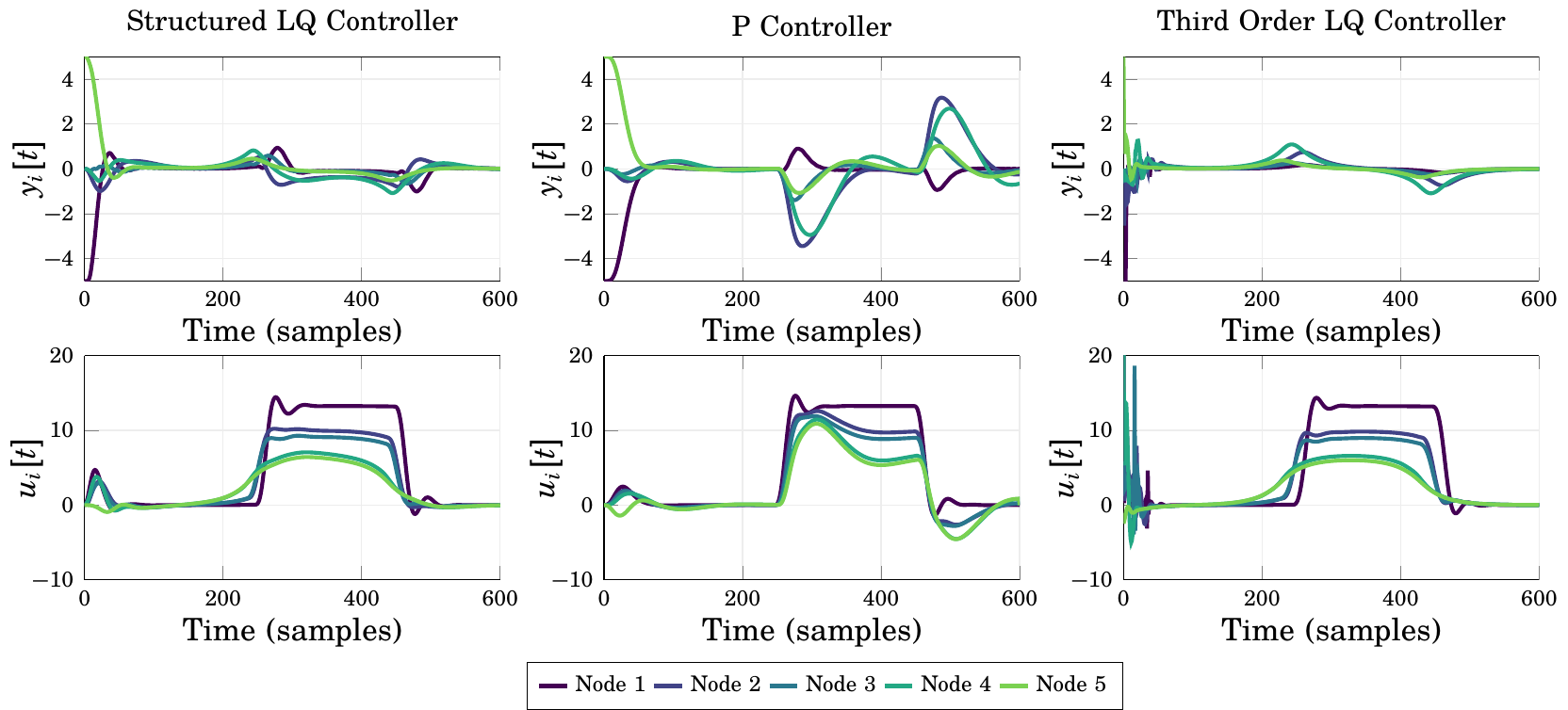}
\caption{Illustration of the time response for the 
different controllers. The systems starts with initial condition $[-5,0,0,0,5]$ corresponding to a step change. Between time 250 and 450 there is a disturbance in pool one with a discharge rate that gives a change of one unit per minute to the level.}
\label{fig:time_resp}
\end{sidewaysfigure}

We use the cost function parameters $q_i = 1$, $r_N = 0.3$, $r_i = 0,\ i\neq N$ and $\rho_i=0$. In Fig. \ref{fig:time_resp} the time responses for the three different controllers are depicted. Canal one, three, and five are modeled as the first pool and canal two and four are modeled as the second pool. The initial condition is $[5,0,0,0,-5]$, corresponding to a change in set-point resulting in water needing to be moved through the graph. Then there is a disturbance in pool one between time 250 and 450, corresponding to a change in level of 1 unit/minute. It can be seen that the third-order {LQ}-controller is very aggressive for the step response and this step response would neither be wanted, nor implementable at the gates.

Next we consider how the performance of the different controllers scales with the size of the network. From now on, all pools have the dynamics in the first pool model. This allows us to clearly see the effect of the  varied parameter. Also, to limit the effect of the design decision for the P-controller, we ran a set of different controllers with $k_i$ as a factor of [0.25, 0.5, 1, 1.5, 2] of the nominal value, and picked the best performance for each configuration.
The left graph in Fig.~\ref{fig:dist_perf} depicts how the change in the number of pools affect the cost when the disturbance is kept in pool $N-1$, which is the second pool counting from the reservoir. For the right graph, the number of pools in the network is fixed to 10, and the pool which the disturbance acts upon is varied. 
For both cases the disturbance is acting between time $200$ and $400$. We can see that the two {LQ}-controllers improves performance slightly when the graph size increases, while the P-controller does not utilize the additional pools. 
A bigger difference is seen when the disturbance pool is varied. Here it can be noted that there is an increase in performance for all the controllers when the disturbance pool is far away from the reservoir. For the P-controller this is partly due to the controller only using the pools upstream of the disturbance. In general, the reason that the performance is increased when the disturbance is further downstream  could be that it is more efficient when the transportation from the reservoir and the other pools are all in the same direction.
We also note that the performance of the two {LQ}-controllers are almost identical for both cases. This is likely due to the fact that the disturbances are low-pass filtered, and thus the low-pass filtering of the inputs for the structured controllers does not limit the performance much.

\begin{figure}
    \centering
    \includegraphics[width = 0.9\textwidth]{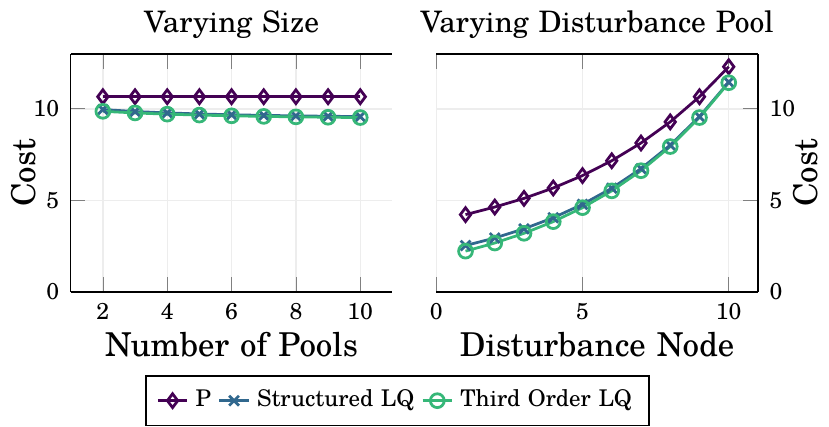}
    \caption{Comparison of the performance for the different controllers when there is a planned disturbance in the network. In the left figure the disturbance is always in pool $N-1$ and $N$ is varying. In the right figure $N$ is fixed to 10 and the pool with the disturbance is varied.}
    \label{fig:dist_perf}
\end{figure}

Indeed, in Fig.~\ref{fig:step_perf} we consider the performance when there is a change in set-points, requiring water to be moved from the $N$'th pool (the pool after the reservoir) to the first pool (the most downstream one). Unsurprisingly, it can be seen that the third-order {LQ} controller outperforms the structured controller, as it can directly cancel out the waves.
On the other hand, the time response in Fig. \ref{fig:time_resp} indicated that the third-order {LQ} controller needs to be made less aggressive, and the performance of the third-order {LQ} controller can most likely not be reached with a controller suitable for implementation.
The difference between the structured controller and the P-controller is bigger here than for the disturbance rejection.

\begin{figure}
    \centering
    \includegraphics[width = 0.9\textwidth]{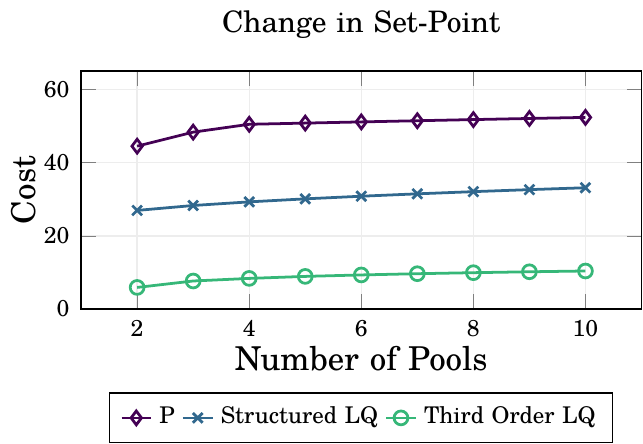}
    \caption{The performance for the different controller for non zero initial conditions, corresponding to a change in set-point. The initial conditions are $y_1 = -1$, $y_N = 1$ and $y_i=0$ for $2 \leq i \leq N-1$.}
    \label{fig:step_perf}
\end{figure}

Finally, we consider how well the trade-off between input deviations and state deviations can be handled by the structured controller. In Fig.~\ref{fig:inp_comp} we have plotted $\sum y_i[t]^2$ on the x-axis and $\sum u_i[t]$ and $\sum (u_i[t]-u_i[t-1])^2$ respectively on the y-axis for different design parameters. For the structured {LQ} controller $r_N$ is varied and for the third-order {LQ} controller  $r_i$ and $\rho_i$ respectively are varied. The simulations are carried out on a ten pool network with a disturbance in pool five between time 200 and 400.
For the trade-off between the quadratic deviations in the input and in the states, the structured controller allows through the parameter $r_N$ to hold up quite well to the third-order {LQ} controller. However, it can be seen that the difference is larger for lower input deviations, which is to be expected.
When it comes to minimizing the square of change in input, $(u_i[t]-u_i[t-1])^2$, the structured {LQ} controller have only a limited ability to influence the trade off though the parameter $r_N$. Consequently, the trade off becomes quickly worse than for the standard third-order {LQ} controller. However, if we consider the time response in Fig.~\ref{fig:time_resp} the input variations look quite timid, with it being almost constant during the disturbance. If one wanted to reduce the input changes further, one could consider adding an additional local controller to the low-pass filter that minimizes the input changes.

\begin{figure}
    \centering
    \includegraphics[width = 0.9\textwidth]{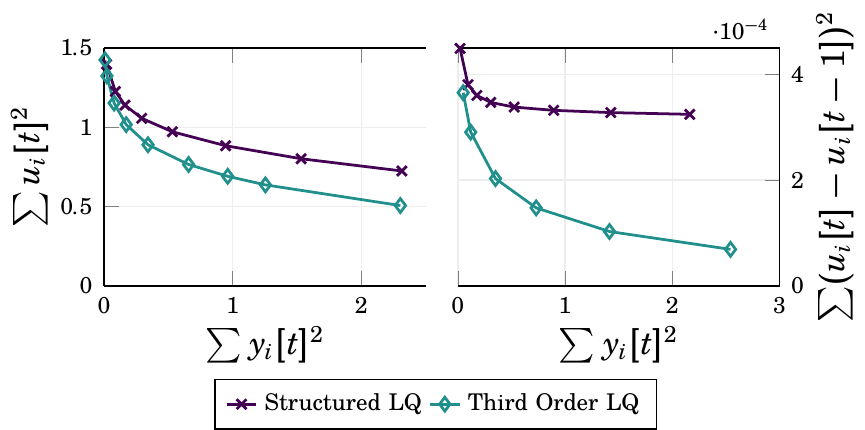}
    \caption{Comparison for how well the two different {LQ} controllers can handle the trade off between level deviations and input deviations. In the left figure each data point shows the square of input deviations and levels deviations for a choice of design parameters. In the right figure the square of the levels and change in input $(u_i[t]-u_i[t-1])^2$ is plotted for different design parameters. It can be seen that the structured controller can do a fairly good job of handling the trade off between levels and input deviation, while the trade off between levels deviations and change in input is worse. }
    \label{fig:inp_comp}
\end{figure}

\appendix
\section{Appendix}

\subsection{Proof of Theorem 1}
In this section Theorem 1 will be proven. We start with flows between two pools, that is $u_i,\ i <N$, and then find the optimal flow $u_N$ from the reservoir. We remind ourselves of the following definitions, which will be used in the proof:
\[
  \sigma_i = \sum_{j=1}^{i-1}\tau_i,\quad D_i[t] = \sum_{j=1}^i d_j[t-\sigma_j].
\]
The proof will rely on results presented in the extended version of \cite{heyden2021structured}\footnote{That version can be found within the paper}. Note that in that paper the notation for $u_N$ is $v_N$. Furthermore, we call the level in each node $y$ instead of $z$ in this paper.

\subsubsection{Optimal Internal Flows.}
We will derive the optimal controller for the following dynamics
\begin{equation}\label{eq:dyn_proof}
  y_i[t+1] = y_i[t]+u_i[t-\tau_{i}-\bar\tau] - u_{i-1}[t-\bar\tau]+d_i[t-\bar\tau].
\end{equation}
 When that is done, we will present a change of variables that transforms the dynamics to the model used for control synthesis in \eqref{eq:first_delayed_des}.

To get back to the dynamics studied in \cite{heyden2021structured} we apply the following change of variables.
Let $\nu_i[t] = u_i[t-\bar\tau]$ for $i\leq N-1$, $\nu_N[t] = u_N[t-\tau_N-\bar\tau]$, $\delta_i[t] = d_i[t-\bar\tau]$, $\Delta_i[t] = D_i[t-\bar\tau]$, and finally
\[
  \xi_k[t] = \sum_{i=1}^k\Big( y_i[t] + \sum_{s=1}^{\tau_i}\nu_i[t-s]\Big).
\]
In terms of these variables the dynamics in \eqref{eq:dyn_proof} are given by
\begin{equation*}
\begin{aligned}
  y_1[t+1] & = y_1[t]+\nu_1[t-\tau_1] +\delta_1[t]\\
  y_i[t+1] & = y_i[t]+\nu_i[t-\tau_i] - \nu_{i-1}[t]+\delta_i[t]\\
  y_N[t+1] & = y_N[t] + \nu_N[t] - \nu_{N-1}[t] + \delta_N[t],
  \end{aligned}
\end{equation*}
which are the the dynamics studied in \cite{heyden2021structured}, but with production only in the top node.
Lemma 1-iii from \cite{heyden2021structured} holds for any production, and we can always change what time is defined as zero, which gives that for $k<N$ (it holds that $v_k[t] = 0$ and $\bar{V}_i[t] = D_i[t],\ i<N$)
\begin{multline*}
    \nu_{k-1}[t] = (1-\frac{\gamma_{k}}{q_k})(y_{k}[t] + \nu_{k}[t-\tau_{k}])  +\delta_k[t]\\
  -\frac{\gamma_k}{q_k}\Big(\xi_{k-1}[t]+{\Delta}_k[t+\sigma_k] + \sum_{i=1}^{k-1} \sum_{d=0}^{\tau_i-1}{\Delta}_i[t+\sigma_i+d]\Big).
\end{multline*}
For the outflow of node N, which has production, it holds that  $\bar{V}_N = D_N[t]+ v_N[t]$. And thus $\nu_{N-1}$ is given by
\begin{multline*}
    \nu_{N-1}[t] = (1-\frac{\gamma_{N}}{q_N})y_{N}[t]  +\delta_N[t] + \nu_N[t]\\
  -\frac{\gamma_N}{q_N}\Big(\xi_{N-1}[t] +\nu_N[t] +\Delta_N[t+\sigma_N]+  \sum_{i=1}^{N-1} \sum_{d=0}^{\tau_N-1}{\Delta}_N[t+\sigma_N+d]\Big).
\end{multline*}
 Rewriting either expression in terms of the original variables and shifting the time variable by $\bar\tau$ gives for $k<N$
 \begin{multline}\label{eq:u_non_causal}
 u_{k-1}[t] = (1-\frac{\gamma_k}{q_k})(y_k[t+\bar\tau] + u_k[t-\tau_k]) + d_k[t] \\
 -\frac{\gamma_k}{q_k}\left[\sum_{i=1}^{k-1}\Big(y_i[t+\bar\tau] +\sum_{s=1}^{\tau_i}u_i[t-s] \Big) +  D_k[t+\sigma_k] + \sum_{i=1}^{k-1}\sum_{s=0}^{\tau_i-1}D_i[t+\sigma_i+s]\right].
 \end{multline}
This expression can not be used for implementation, as $y_i[t+\bar\tau]$ is not known at time $t$.
 This problem is however easily solved by using the dynamics, which gives that
 \[ y_i[t+\bar{\tau}] = y_i[t] + \sum_{s = 1}^{\bar\tau} \Big(-u_{i-1}[t- s] + d_i[t-s] + u_{i}[t-\tau_i - s] \Big).\]
Collecting all terms  in \eqref{eq:u_non_causal} gives for $u_i$, $1\leq i<k-1$,
\[
  -\frac{\gamma_k}{q_k}\Big(\sum_{s=1}^{\tau_i} u_i[t-s] + \sum_{s=\tau_i+1}^{\tau_i+\bar\tau} u_i[t-s] - \sum_{s=1}^{\bar\tau}u_i[t-s] \Big) = -\frac{\gamma_k}{q_k}\sum_{s=\bar\tau+1}^{\tau_i+\bar\tau} u_i[t-s],\]
for $u_{k-1}$
\begin{multline*}
  (1-\frac{\gamma_k}{q_k})(-\sum_{s=1}^{\bar\tau}u_{k-1}[t-s])-\frac{\gamma_k}{q_k}\Big(\sum_{s=1}^{\tau_{k-1}} u_{k-1}[t-s] + \sum_{s=\tau_{k-1}+1}^{\tau_{k-1}+\bar\tau} u_{k-1}[t-s]\Big)\\
  =-\frac{\gamma_k}{q_k}\sum_{s=\bar\tau+1}^{\tau_{k-1}+\bar\tau} u_{k-1}[t-s]-\sum_{s=1}^{\bar\tau}u_{k-1}[t-s],
\end{multline*}
and finally for $u_k$
\[
  (1-\frac{\gamma_k}{q_k})\Big(u_k[t-\tau_k] + \sum_{s=\tau_k+1}^{\bar\tau+\tau_k}u_k[t-s]\Big) =(1-\frac{\gamma_k}{q_k}) \sum_{s=\tau_k}^{\bar\tau+\tau_k}u_k[t-s].
\]
This gives that
\begin{multline*}
 u_{k-1}[t] = (1-\frac{\gamma_k}{q_k})(y_k[t] + \sum_{s=\tau_k}^{\tau_k+\bar\tau} u_k[t-s] + \sum_{s = 1}^{\bar\tau}d_k[t-s]) + d_k[t]   - \sum_{s=1}^{\bar\tau}u_{k-1}[t-s]\\
 -\frac{\gamma_k}{q_k}(\sum_{i = 1}^{k-1}\Big( y_i[t] + \sum_{s=\bar\tau+1}^{\tau_i+\bar\tau} u_i[t-s]  +\sum_{s = 1}^{\bar\tau}d_i[t-s]\Big) + \sum_{i=1}^{k-1}\sum_{s=0}^{\tau_i-1}D_i[t+\sigma_i+s] + D_k[t+\sigma_k]),
 \end{multline*}
which is equal to
\[\begin{aligned}
   u_{k-1}[t] = (1-\frac{\gamma_k}{q_k})\Big[y_k[t] + \sum_{s=\tau_k}^{\tau_k+\bar\tau} u_k[t-s]- \sum_{s=1}^{\bar\tau}u_{k-1}[t-s] + \sum_{s = 0}^{\bar\tau}d_k[t-s]\Big]&   \\
 -\frac{\gamma_k}{q_k}\Big[\sum_{i = 1}^{k-1}\Big( y_i[t] + \sum_{s=\bar\tau+1}^{\tau_i+\bar\tau} u_i[t-s]  +\sum_{s = 1}^{\bar\tau}d_i[t-s]\Big) +  \sum_{s=1}^{\bar\tau}u_{k-1}[t-s]& \\+  \sum_{i=1}^{k-1}\sum_{d=0}^{\tau_i-1}D_i[t+\sigma_i+d] + (D_k[t+\sigma_k]-d_k[t]\Big]&.
\end{aligned}\]
 Now, let
 \[
  p_k[t] = y_k[t] + \sum_{s=\tau_k}^{\tau_k+\bar\tau}u_k[t-s] - \sum_{s=1}^{\bar\tau}u_{k-1}[t-s] + \sum_{s=0}^{\bar\tau}d_k[t-s]
 \]
 and
 \begin{multline*}
  m_k[t] =  \sum_{i=1}^k\left[y_i[t] +\sum_{s=\bar\tau+1}^{\tau_i+\bar\tau}u_i[t-s] + \sum_{s=1}^{\bar\tau} d_i[t-s]+ \sum_{s=0}^{\tau_i-1}D_i[t+\sigma_i+s] \right]  \\
  +  \sum_{s=1}^{\bar\tau}u_k[t-s] + D_{k}(t+\sigma_{k+1}).
 \end{multline*}
 Since $D_k[t+\sigma_k]-d_k[t] = D_{k-1}[t+\sigma_k]$ it then holds that
 \[
  u_{k-1}[t] = (1-\frac{\gamma_k}{q_k})p_k[t] -\frac{\gamma_k}{q_k}m_{k-1}[t].
 \]
$m_k[t]$ can be calculated recursively as follows:
 \[
  m_k[t] = m_{k-1}[t] + p_k[t]+\sum_{s=1}^{\tau_k-1}u_k[t-s] + \sum_{s=1}^{\tau_k}D_k[t+\sigma_k+s]
 \]
 where it is used that $d_k[t]+ D_{k-1}[t+\sigma_{k-1}+\tau_k] = D_k[t+\sigma_k]$.

 \subsubsection{Optimal Production.}
 The steps in Lemma 3 in \cite{heyden2021structured}, can be carried out with $\rho_N = r$ to find the optimal  $\nu_N$ (that is $v_N$ in the lemma).  Equation 16 in \cite{heyden2021structured} will then give that
 \[
  \nu_N[t] =   -\frac{X}{r} \left(\xi_{N-1}[t] +\sum_{i=1}^{N-1}\sum_{s = \sigma_i}^{\sigma_{i+1}-1}\Delta_i[t+s] +  \mu_N[t]\right).
 \]
 Where $X$ is given by
 \[
  X =  -\frac{\gamma_N}{2}+\sqrt{\gamma_Nr+\frac{\gamma_N^2}{4}},
 \]
  and $\mu_N[t]$ is given by
\[
  \mu_N[t] = y_N[t] + \sum_{s=0}^{H} \Delta_N[t+\sigma_{N}+s]\prod_{j=2}^{s+1}g,
\]
where $g = X/(X+\gamma_N)$ and the product over an empty set is defined to be $1$.
  Note that in \cite{heyden2021structured} all $X_N(i)$  are the same, as otherwise $X_N(H+2)+\gamma_N$ would not be a solution to the Riccati equation, and that $\tau_N$ was defined as $H+1$ in \cite{heyden2021structured} for notational convenience.

Going back to the original variables and shifting the time variable by $\tau_n+\bar\tau$ gives
\begin{multline}\label{eq:u_N_proof}
  u_N[t] =   -\frac{X}{r} \Big[\sum_{i=1}^{N-1}\Big(y_i[t+\tau_N+\bar{\tau}]+\sum_{s=1}^{\tau_i}u_i[t+\tau_N-s] +
  \sum_{s = \sigma_i}^{\sigma_{i+1}-1}D_i[t+s + \tau_N]\Big) \\
   +  y_N[t+\tau_N+\bar\tau] + \sum_{s=0}^{H} D_N[t+\sigma_{N}+\tau_N+s]\prod_{j=2}^{s+1}g\Big].
\end{multline}
Using the dynamics to rewrite $y_i[t+\tau_N+\bar\tau]$ gives
\[
  y_i[t+\tau_N+\bar{\tau}]  = y_i[t] + \sum_{s =\tau_i+1-\tau_N}^{\tau_i+\bar\tau}u_i[t-s] -\sum_{s=1-\tau_N}^{\bar\tau}u_{i-1}[t-s]+\sum_{s=1-\tau_N}^{\bar\tau}d_{i}[t-s].
\]
One can note that all terms in the RHS will not be known at time $t$. However, the issue solves itself as follows.
Collecting all terms containing $u_i$ $i\leq N$ in \eqref{eq:u_N_proof} gives
\[
  \sum_{s=1-\tau_N }^{\tau_i-\tau_N}u_i[t-s]+ \sum_{s =\tau_i+1-\tau_N}^{\tau_i+\bar\tau}u_i[t-s]-\sum_{s=1-\tau_N}^{\bar\tau}u_{i}[t-s] = \sum_{s=\bar\tau+1}^{\tau_i+\bar\tau} u_i[t-s].
\]
And all terms including $u_N$ gives
\[
   \sum_{s =-\tau_N+\tau_N+1}^{\tau_N+\bar\tau}u_N[t-s] =  \sum_{s =1}^{\tau_N+\bar\tau}u_N[t-s].
\]
And we note that both sums are now quantities known at time $t$.

All the disturbances $d_i[t]$ for $t\geq 0$ are given by
\begin{equation}\label{eq:u_n_d}
  \sum_{i=1}^N \sum_{s=0}^{\tau_N-1}d_i[t+s] + \sum_{i=1}^{N-1}\sum_{s = \sigma_i}^{\sigma_{i+1}-1}D_i[t+s + \tau_N] + \sum_{s=0}^{H} D_N[t+\sigma_{N}+\tau_N+s]\prod_{j=2}^{d+1}g. 
\end{equation}
It holds that $D_N[t+\sigma_N+\tau_N+s] = 0$ for all $s+\tau_N>H$ by the assumption that $d_i[t] = 0$ for $t>H$. Thus the last term in \eqref{eq:u_n_d} is given by 
\begin{multline*}
  \sum_{s=0}^{H-\tau_N}D_N[t+\sigma_N+\tau_N+s]\prod_{j = 2}^{s+1}g \\
  = \sum_{s=\tau_N}^{H}D_N[t+\sigma_N+s]\prod_{j=2}^{s-\tau_N+1}g = \sum_{s=\tau_N}^{H}D_N[t+\sigma_N+s]\prod_{j=\tau_N+2}^{s+1}g.
\end{multline*}
Using the definition for $D_i[t]$, the first two terms in \eqref{eq:u_n_d} are equal to
\[
  \sum_{i=1}^N \sum_{s=0}^{\tau_N-1}d_i[t+s] + \sum_{i=1}^{N-1}\sum_{s = \sigma_i+\tau_N}^{\sigma_{i+1}+\tau_N-1}\sum_{j=1}^i d_j[t+s-\sigma_j].
\]
Collecting all $d_k$ terms for a given $k$ gives
\[
  \sum_{s=\sigma_k}^{\sigma_k+\tau_N-1}d_k[t+s-\sigma_k] + \sum_{i=k}^{N-1}\sum_{s = \sigma_i+\tau_N}^{\sigma_{i+1}+\tau_N-1} d_k[t+s-\sigma_k] = \sum_{s=\sigma_k}^{\sigma_{N+1}-1}d_k[t+s-\sigma_k].
\]
And thus the first two terms in \eqref{eq:u_n_d} are equal to,
\[
   \sum_{i=1}^N \sum_{s=\sigma_i}^{\sigma_{N+1}-1}d_i[t+s-\sigma_i]
 = \sum_{i=1}^{N}\sum_{s = \sigma_i}^{\sigma_{i+1}-1}\sum_{j=1}^i d_j[t+s-\sigma_j] = \sum_{i=1}^{N}\sum_{d = \sigma_i}^{\sigma_{i+1}-1}D_i[t+d].
\]
Thus the total effect of the planned  disturbances in the expression for $u_N[t]$ in \eqref{eq:u_N_proof} is
\[
   \sum_{s=1}^{\bar\tau}d_{i}[t-s]+\sum_{i=1}^{N}\sum_{s = \sigma_i}^{\sigma_{i+1}-1}D_i[t+s] + \sum_{s=\tau_N}^{H} D_N[t+\sigma_{N}+s]\prod_{j=\tau_N+2}^{s+1}g.
\]
So $u_N[t]$ is given by
\begin{multline*}
  u_N[t] =   -\frac{X}{r} \Big[\sum_{i=1}^{N}\Big(y_i[t]+\sum_{s=\bar\tau+1}^{\tau_i+\bar\tau} u_i[t-s] + \sum_{s=1}^{\bar\tau}d_{i}[t-s]+ \sum_{s = \sigma_i}^{\sigma_{i+1}-1}D_i[t+s]\Big) \\
  +\sum_{s =1}^{\bar\tau}u_N[t-s] 
  + \sum_{s=\tau_N}^{H} D_N[t+\sigma_{N}+s]\prod_{j=2}^{d+1}g\Big].
\end{multline*}
Which can be expressed as
\[
  u_N[t] = -\frac{X}{r}\left[m_N + \sum_{s=\tau_N+1}^{H} D_N[t+\sigma_{N}+s]\prod_{j=2}^{d+1}g  \right].
\]

\subsubsection{Change of variables.}
The structured controller is synthesized for dynamics on the form in \eqref{eq:dyn_proof}, while the plant model is on the form in \eqref{eq:first_delayed_des}. However, there exists a simple change of variables that allows us to transform between the two models.
Consider the synthesis dynamics
\[
  y_i[t+1] = y_i[t] +  b_iu_i[t-\tau_i-\bar\tau] - c_{i}( u_{i-1}[t-\bar\tau] -  d_i[t-\bar\tau]).
\]
Let
\[\hat b_1 = b_1, \quad \hat b_i = \frac{b_i}{c_i}\hat b_{i-1}, \quad
  z_1 = y_1, \quad  z_i = \frac{\hat b_{i-1}}{c_i}y_i,
\]
and 
\[
  \hat u_i = \hat b_i u_i, \quad \hat d_1 = c_1d_1, \ \ \hat d_i = \hat b_{i-1} d_i \ i\geq 2.
\]
Then the dynamics in \eqref{eq:first_delayed_des} are transformed to
\begin{equation}\label{eq:transformed_dyn}
  z_i[t+1] = z_i[t] + \hat u_i[t-\tau_i-\bar\tau] - \hat u_{i-1}[t-\bar\tau] +\hat d_i[t-\bar\tau].
\end{equation}
This follows trivially for node 1. For node $i$, we get by replacing $u_{i-1}$ with $1/\hat{b}_{i-1}\cdot \hat{u}_{i-1}$
\[
  y_i[t+1] = y_i[t] +  b_iu_i[t-\tau_i-\bar\tau] - c_i/\hat{b}_{i-1}\cdot u_{i-1}[t-\bar\tau] -  c_id_i[t-\bar\tau]).
\]
Which can be rewritten as
\[
  \hat{b}_{i-1}/c_iy_i[t+1] = \hat{b}_{i-1}/c_i y_i[t] +  \hat{b}_{i-1}b_i/c_iu_i[t-\tau_i-\bar\tau] - \hat{u}_{i-1}[t-\bar\tau] -  \hat{b}_{i-1}d_i[t-\bar\tau]).
\]
Applying the suggested change of variables gives the dynamics in \eqref{eq:transformed_dyn}.
For the cost parameters it follows that
\[
  ru_N^2 = \frac{r}{\hat b_N^2}\hat u_N^2,
  \quad q_iy_i = \frac{q_ic_i^2}{\hat b_{i-1}^2}z_i,\quad  i\geq 2.
\]

This change of variables is implemented in  \cref{alg:init} on lines 4 and 8, and in \cref{alg:impl} on lines 1-2 and 19.

\subsection{LQ with known disturbance}
Here we give the derivation of a LQ controller with feed-forward. That is we consider the problem

\[\begin{aligned}
  \minimize \quad & \sum_{t=0}^{\infty} x[t]^TQx[t] + u[t]^TRu[t] \\
  \st \quad &x[t+1] = Ax[t] + Bu[t] + v[t] \\
  & x[0] \text{ and } v[t] \text{ given}.
\end{aligned}\]
This is a well studied problem when $v[t] = 0$, see for example \cite{bertsekas2012dynamic}, and we only consider the extension due to the planned disturbance $v[t]$.

Assume that $v[t] = 0$ for all $t>N$ and $R$ is positive definite. Let $S$ be the solution to the algebraic Riccati equation 
\[
  S =A^TSA- A^TSB(B^TSB+R)^{-1}B^TSA + Q.
\]
Note that $S$ will be symmetric.
The cost to go from time $N+1$ is given by $x[N+1]^TSx[N+1]$, and the optimal $u[N]$ is given by the minimizer for the cost to go from time $t = N$:
\begin{multline}\label{eq:cost_to_go_N}
  x[N]^TQx[N] + u[N]^TRu[N] + x[N+1]^TSx[N+1] = \\
  x[N]^TQx[N] + u[N]^TRu[N] + \\
  (Ax[N] + Bu[N] + v[N])^TS(Ax[N] + Bu[N] + v[N]).
\end{multline}
Collecting all terms which has $u[N]$ in them gives
\[
  u[N]^TRu[N] + 2(Ax[N]+ v[N])^TSBu[N] + u[N]^TB^TSBu[N].
\]
The problem is convex, and differentiating with respect to $u$ gives that the optimal $u$ is given by
\[\begin{aligned}
  2(B^TSB+R)u[N] &= -2B^TS(Ax[N]+v[N]) \\
  u[N]  & = -(B^TSB+R)^{-1}B^TS(Ax[N]+v[N]).
\end{aligned}\]
Let $\Pi[N] = Sv[N]$. It holds that $u[N] = Kx[N] +K_v\Pi[N]$, where $K$ and $K_v$ are as in \eqref{eq:LQ_K_expr}.
 Inserting the expression for $u[N]$ into \eqref{eq:cost_to_go_N} and only considering terms that depend on $x[N]$ gives for the cost to go:
\begin{equation}\label{eq:cost_to_go_N_simp}
\begin{aligned}
  &x^T[N]Qx[N] + (Ax[N]+v[N])^T\Big[SB(B^TSB+R)^{-T}R(B^TSB+R)^{-1}B^TS\\
  &\qquad+S - 2SB(B^TSB+R)^{-1}B^TS\\
  & \qquad +SB(B^TSB+R)^{-T}B^TSB(B^TSB+R)^{-1}B^TS\Big](Ax[N]+v[N]) = \\
  &x^T[N]Qx[N]\hspace{-2pt}  + \hspace{-2pt} (Ax[N]\hspace{-2pt}+\hspace{-2pt}v[N])^T\left[S\hspace{-2pt}-\hspace{-2pt}SB(B^TSB+R)^{-1}B^TS \right](Ax[N]\hspace{-2pt}+\hspace{-2pt}v[N]).
\end{aligned}
\end{equation}
The terms containing only $x[N]$ simplifies to $x^T[N]Sx[N]$. For the terms containing $x[N]$ and $v[N]$ we get
\[\begin{aligned}
  2v^T[N](S-SB(B^TSB+R)^{-1}B^TS)Ax[N]& \\
  =2v^T[N]S(A+BK)x[N]& \\
  = 2\Pi[N]^T(A+BK)x[N]&.
\end{aligned}\]
Thus the cost to go at time $N-1$ is given by
\[
  x[N-1]^TQx[N-1] + u[N-1]^TRu[N-1] + x[N]^TSx[N] + 2\Pi[N]^T(A+BK)x[N].
\]
Now assume that the cost to go for some $t$, $t\leq N-1$, is given by
\begin{equation}\label{eq:cost_to_go}
\begin{aligned}
  x[t]^TQx[t] + u[t]^TRu[t] + x[t+1]^TSx[t+1] + 2\Pi[t+1]^T(A+BK)x[t+1]  &\\
  =x[t]^TQx[t] + u[t]^TRu[t] + (Ax[t] + Bu[t] + v[t])^TS(Ax[t] + Bu[t] + v[t])&\\
   \qquad \qquad + 2\Pi[t+1]^T(A+BK)(Ax[t] + Bu[t] + v[t])&.
\end{aligned}
\end{equation}
The assumption holds for $t = N -1$ by the previous calculations.
Differentiating w.r.t $u[t]$ gives
\[
\begin{aligned}
  2(B^TSB+&R)u[t] = -2B^TS(Ax[N]+v[N]) -2B^T(A+BK)^T\Pi[t+1] \\
  \Rightarrow u[t]   &= -(B^TSB+R)^{-1}B^T(SAx[t]+Sv[t] + (A+BK)^T\Pi[t+1])\\
  & =Kx[t] + K_v\Big(Sv[t] + (A+BK)^T\Pi[t+1]\Big) .
\end{aligned}
\]
Letting $\Pi[t] =(A+BK)^T\Pi[t+1]+ Sv[t]$ gives that
\begin{equation}\label{eq:optimal_u_ecc}
  u[t]  = Kx[t] + K_v\Pi[t],
\end{equation}
 as long as the cost to go is given by \eqref{eq:cost_to_go}. 

Now we consider the cost to go in \eqref{eq:cost_to_go}. Every term that was in the cost to go for $t=N$ in \eqref{eq:cost_to_go_N_simp} will remain, that is the term $x[t]^TSx[t]$ and \mbox{$2v^T[t]S(A+BK)x[t]$}. However new terms will be added due to the addition of $\Pi[t+1]$ to the expression for $u[t]$ and the new term 
$$2\Pi[t+1](A+BK)x[t+1].$$
in \eqref{eq:cost_to_go} compared to \eqref{eq:cost_to_go_N}.
We ignore the term $2\Pi[t+1](A+BK)x[t+1]$ for now, and focus on the effect of $\Pi[t+1]$ in the expression for $u$.
The resulting effect on \eqref{eq:cost_to_go} for terms that include $x[t]$ are given by (where the first term is due to $u[t]^TRu[t]$, the second is due to $(Bu[t])^TSBu[t]$, and the third is due to $(Bu[t])^TSAx[t]$)
\[\begin{aligned}
  &2\Pi[t+1]^T(A+BK)K_v^TRKx[t] \\
  +&2\Pi[t+1]^T(A+BK)K_v^T  B^TSB  Kx[t]\\
   +&2\Pi[t+1]^T(A+BK)K_v^T  B^TSAx[t] = \\
  -&2\Pi[t+1]^T(A+BK)K_v^TB^TSAx[t]\\
   +&2\Pi[t+1]^T(A+BK)K_v^T   B^TSAx[t] = 0.
\end{aligned}\]
Where we have used that $(R+B^TSB)K = -B^TSA$.
The effect of the new term $2\Pi[t+1](A+BK)x[t+1]$ in terms of $x[t]$ is given by
\[
  2\Pi[t+1]^T(A+BK)(A+BK)x[t].
\]
So the total effect of the disturbances on the cost to go is given by 
\[2v[t]^TS(A+BK)x[t] +  2\Pi[t+1]^T(A+BK)(A+BK)x[t]
 = 2\Pi[t]^T(A+BK)x[t], 
\]
and thus the cost to go is given on the assumed form in \eqref{eq:cost_to_go} for $t-1$ as well. Thus \eqref{eq:cost_to_go} and \eqref{eq:optimal_u_ecc} holds for $0\leq t\leq N$.

\printbibliography
\end{document}